\newcommand{\keywords}[1]{\par\addvspace\baselineskip\noindent\keywordname\enspace\ignorespaces#1}
\def\emphasize#1{\textit{#1}}
\newcommand{\Z}{\ensuremath{{\mathcal Z}}}
\newcommand{\zn}{\ensuremath{\Z_n}}
\newcommand{\ceil}[1]{\left\lceil #1\right\rceil}
\newcommand{\floor}[1]{\left\lfloor #1\right\rfloor}
\begin{document}
\def\LongTitle{Large Networks of Diameter Two\\Based on Cayley Graphs}
\def\ShortTitle{Large Networks of Diameter Two Based on Cayley Graphs}

\mainmatter  

\title{\LongTitle}

\titlerunning{\ShortTitle} 

\author{Marcel Abas%
}%
\authorrunning{\ShortTitle}

\institute{Institute of Applied Informatics, Automation and Mechatronics\\
Faculty of Materials Science and Technology in Trnava\\
Slovak University of Technology in Bratislava\\
Trnava, Slovak Republic\\
abas@stuba.sk
}

\toctitle{}
\tocauthor{}
\maketitle

\begin{abstract}
In this contribution we present a construction of large networks of diameter two and of order $\frac{1}{2}d^2$ for every degree $d\geq 8$, based on Cayley graphs with surprisingly simple underlying groups. 
For several small degrees we construct Cayley graphs of diameter two and of order greater than $\frac23$ of Moore bound and   
we show that Cayley graphs of degrees
$d\in\{16,17,18,23,24,31,\dots,35\}$ constructed in this paper are the largest currently known vertex-transitive graphs of diameter two.
\keywords{Degree; Diameter; Moore bound; Cayley graph, Networks.}
\end{abstract}
\section{Introduction}\label{sec_introduction}  
 
Nowadays, large-scale networks (interconnection, optical, social, electrical, etc.) are a subject of very intensive study.
Representing nodes of networks by vertices and communication lines by (directed) edges, networks can be modeled by (di)graphs. Below in figure \ref{fig_model} we can see a model of a simple computer network with computers $c_0,c_1,\dots,c_{7}$.

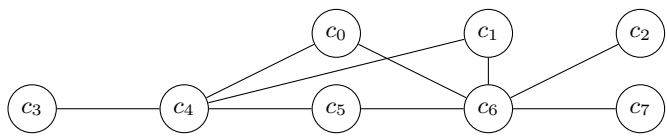
\begin{figure}\label{fig_model} 
\begin{center}
\begin{tikzpicture}
\foreach \i in {0,1,2} \node[draw,circle] (a\i) at (2*\i,0) {$c_\i$};
\foreach \i/\j in {3/2,4/1} \node[draw,circle] (a\i) at (-2*\j,-1) {$c_\i$};
\foreach \i/\j in {5/0,6/1,7/2} \node[draw,circle] (a\i) at (2*\j,-1) {$c_\i$};
\draw (a4)--(a0); \draw (a4)--(a1); 
\draw (a6)--(a5); \draw (a6)--(a0); \draw (a6)--(a1); \draw (a6)--(a2); \draw (a6)--(a7);
\draw (a3)--(a4)--(a5);
\end{tikzpicture}
\caption{A model of a simple computer network}
\end{center}
\end{figure}

Maximum communication delay and maximum communication lines connected to a node are the two main basic limitations on any network. These parameters correspond to the diameter and the maximum (out)degree, respectively, of the corresponding (di)graph. 
The next important property which a good model of a network might posses is simple and efficient routing algorithm. 
Since Cayley graphs are vertex-transitive, it is possible to implement the same routing and communication schemes at each node of the network they model \cite{H1997}.

The problem to find, for given diameter $k$ and maximum degree $d$, the largest order $n(d,k)$ of a graph with given parameters, is in graph theory known as
\emphasize{degree-diameter} problem. There is a well known upper bound on the number $n(d,k)$ - \emphasize{Moore bound}, which gives 
$n(d,k)\leq 1+d+d(d-1)+d(d-1)^2+\cdots+d(d-1)^{k-1}$ for all positive degrees $d$ and diameters $k$. 

The Moore bound for diameter $k=2$ is $n(d,2)\leq d^2+1$ and for degrees $d\geq4$, $d\ne7$ and $d\ne 57$ we have the bound $n(d,2)\leq d^2-1$ \cite{EFH1980}.
The maximum order of a Cayley graph of diameter two and degree $d$ is denoted by $C(d,2)$ and for these graphs we have the following results. In \cite{SS2005} the authors constructed Cayley graphs of diameter two and of order $\frac12(d+1)^2$ for all degrees $d=2q-1$ where $q$ is an odd prime power and the same authors gave a construction of Cayley graphs of diameter two and of order $d^2-O(d^{\frac32})$ for an infinite set of degrees $d$ of a very special type \cite{SS2012}. It was shown in \cite{A2014} that for all degrees $d\geq4$ we have $C(d,2)\geq\frac12 d^2-k$ for $d$ even and $C(d,2)\geq\frac12(d^2+d)-k$ for $d$ odd, where $0\leq k\leq 8$ is an integer depending on the congruence class of $d$ modulo 8. In \cite{E2015} the author has shown that lower as well as upper bounds on the number of vertices of Cayley graphs of diameter two and degree $d$ for underlying dihedral groups are assymptotically $\frac12 d^2$. Finally, in \cite{A2016} the author constructed for all degrees $d\geq 360756$ Cayley graphs of diameter two and of order greater than $0.684d^2$. 

In this paper we give a construction of Cayley graphs of diameter two and of order $\frac{1}{2}d^2$ for every degree $d\geq 8$, with surprisingly simple underlying groups. 
For several small degrees we construct Cayley graphs of diameter two and of order greater than $\frac23$ of Moore bound and we show that Cayley graphs of degrees 
$d\in\{16,17,18,23,24,31,\dots,35\}$ constructed in this paper are the largest currently known vertex-transitive graphs of diameter two.
\section{Preliminaries}\label{sec_preliminaries}
For a given finite group $\Gamma$ and a unit-free, inverse-closed generating set $X$ of $\Gamma$, the Cayley graph $G=Cay(\Gamma,X)$ is a graph with vertex set $V(G)=\Gamma$ and with edge set $E(G)=\{\{g,h\}\vert g\in\Gamma, g^{-1}h\in X\}$. Since $X$ is inverse-closed (that is $X=X^{-1}$), for every $g^{-1}h\in X$ we have $h^{-1}g\in X$. Therefore our Cayley graphs are undirected. It is well known that Cayley graphs are vertex transitive. The Cayley graph for underlying group $\Gamma=\Z_6$ and generating set 
$X=\{1,3,5\}$ is shown in figure \ref{fig_cayley} below. The edges corresponding to generators $1$ and $(1)^{-1}=5$ are drawn dashed.

Throughout this paper, an additive cyclic group of order $n$, with elements $\{0,1,\dots,n-1\}$ and identity element $0$, will be denoted by $\zn$. 
Let $\Gamma_n=(\zn\times\zn)\rtimes\Z_2$ be a semidirect product of $\Z_2$ acting on $\zn^2=\zn\times\zn$ such that
the non-identity element of $\Z_2$ interchanges the coordinates of elements of $\zn^2$.
That is $0\in\Z_2: (x,y)\to (x,y)$ and $1\in\Z_2: (x,y)\to (y,x)$. We will write the elements of $\Gamma_n$ as triples $(x,y,i)$ where $x,y\in\zn$ and $i\in\{0,1\}$. The inverse element to $(x_0,x_1,i)$ is $(-x_i,-x_{i+1},i)$ and for the product of two elements of $\Gamma_n$ we have $(x_0,x_1,i)\cdot (y_0,y_1,j)=(x_0+y_i,x_1+y_{i+1},i+j)$, where the indices are taken modulo 2. 

\begin{figure}\label{fig_cayley} 
\begin{center}
\begin{tikzpicture}[yscale=.9, xscale=2, line width=.7pt]
\def\nnn{6}
\newcount\njedna\njedna=\nnn\advance\njedna by -1
\newcount\j
\tikzstyle{kresli}=[draw, circle, fill=white, inner sep=2pt]; 
\foreach \i in {0,1,...,\njedna} {
\node[kresli] (a\i) at (360*\i/\nnn:2cm and 1cm) {\footnotesize\i}; 
\j=\i\advance\j by \nnn \node[kresli] (a\the\j) at (a\i) {\footnotesize\i};
                           }
\foreach \i in {0,1,...,\njedna} {\j=\i\advance\j by 1 \path[draw,dashed] (a\i)--(a\the\j);};
\foreach \i in {0,1,...,\njedna} {\j=\i\advance\j by 3 \path[draw] (a\i)--(a\the\j);};
\end{tikzpicture}
\caption{Cayley graph for underlying group $\Gamma=\Z_6$ and generating set $X=\{1,3,5\}$}
\end{center}
\end{figure}
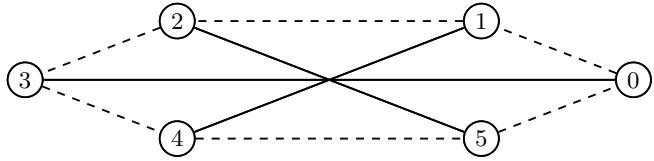
\section{Large Cayley Graphs of Diameter Two}\label{sec_large_CG}

\begin{theorem}\label{thm_main}
Let $r\geq1$ be an integer, let $s,\epsilon\in\{0,1\}$ and let $n=4r+2s+\epsilon$. Then there exists a Cayley graph of diameter two, degree 
$d=2n-s+\epsilon$ and of order $\frac12(d+s-\epsilon)^2$.
\end{theorem}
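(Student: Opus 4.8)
The plan is to realise the required graph as the Cayley graph $Cay(\Gamma_n,X)$ on the whole group $\Gamma_n$. Since $d+s-\epsilon=2n$, the claimed order $\tfrac12(d+s-\epsilon)^2=2n^2$ equals $|\Gamma_n|$, so it suffices to exhibit a unit-free, inverse-closed set $X\subseteq\Gamma_n$ with $|X|=d=2n-s+\epsilon$ such that every element lies at distance at most two from the identity, i.e. $\{(0,0,0)\}\cup X\cup X^2=\Gamma_n$ (connectivity, hence that $X$ generates $\Gamma_n$, then comes for free). First I would split $X=X_0\cup X_1$ according to the third coordinate $i\in\{0,1\}$. By the rule $(x_0,x_1,i)(y_0,y_1,j)=(x_0+y_i,x_1+y_{i+1},i+j)$, products of two generators from the same part land in layer $i=0$ and products from different parts land in layer $i=1$; identifying layer $i$ with $\zn^2$, writing $\sigma$ for the coordinate swap, and writing $\widetilde{X_0},\widetilde{X_1}$ for the projections to $\zn^2$, this turns the distance-two condition into two sumset conditions. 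Layer $0$ must be covered by $\{(0,0)\}\cup\widetilde{X_0}\cup(\widetilde{X_0}+\widetilde{X_0})\cup(\widetilde{X_1}+\sigma\widetilde{X_1})$, and layer $1$ by $\widetilde{X_1}\cup(\widetilde{X_0}+\widetilde{X_1})\cup(\sigma\widetilde{X_0}+\widetilde{X_1})$. The inverse rule $(x_0,x_1,i)^{-1}=(-x_i,-x_{i+1},i)$ shows that inverse-closure of $X$ is exactly $\widetilde{X_0}=-\widetilde{X_0}$ together with $\sigma\widetilde{X_1}=-\widetilde{X_1}$.

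Second, I would produce explicit sets meeting these conditions with $|X_0|+|X_1|=2n-s+\epsilon$. The cheap engine for layer $0$ is the identity $(t,0,1)(t',0,1)=(t,t',0)$: the self-products of the layer-$1$ ``horizontal axis'' $\{(t,0,1)\}$ already sweep out all of $\zn^2$. Its inverse is the ``vertical axis'' $\{(0,-t,1)\}$, so a $\sigma$-antisymmetric portion of these two axes yields an inverse-closed $X_1$ whose square covers a controlled portion of layer $0$. I would then take $X_0$ to be a symmetric (hence inverse-closed) set of size about $n$ in layer $0$ whose sumset $\widetilde{X_0}+\widetilde{X_0}$ covers the complementary portion of layer $0$, and whose translates $\widetilde{X_0}+\widetilde{X_1}$ and $\sigma\widetilde{X_0}+\widetilde{X_1}$ together exhaust layer $1$. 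Because in each resulting membership equation one coordinate stays free, verifying the two covers reduces to solving a single linear congruence modulo $n$, which is routine.

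The delicate point, and the step I expect to be the main obstacle, is not the covering itself but reconciling it with inverse-closure and with the exact degree $d=2n-s+\epsilon$ at the same time. Inverse-closure forces the non-involutory generators to occur in pairs, whereas the layer-$1$ axes contain the involutions $(a,-a,1)$ and the symmetric layer-$0$ set may contain elements of order two, and the number of such self-paired generators depends on $\gcd(2,n)$ and on $n\bmod 4$. This is precisely what makes $|X|$ differ from $2n$ by the small correction $-s+\epsilon$ as $n$ runs through the residues $0,1,2,3\pmod 4$. I would therefore organise the endgame as a case analysis following $n=4r+2s+\epsilon$: in each case I list the generators explicitly, check the two sumset covers, and count $|X|$ exactly. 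The hypothesis $d\ge 8$, equivalently $r\ge 1$ and hence $n\ge 4$, is what guarantees the two portions of layer $0$ are large enough to overlap into a full cover and that the cross translates fill layer $1$.
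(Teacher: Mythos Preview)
Your framework is exactly the one the paper uses: work in $\Gamma_n$, split the generating set as $X=X_0\cup X_1$ by the $\Z_2$-coordinate, and reduce diameter two to the two sumset covers of the layers that you wrote down. The inverse-closure conditions $\widetilde{X_0}=-\widetilde{X_0}$ and $\sigma\widetilde{X_1}=-\widetilde{X_1}$ are also identified correctly, as is the equivalence $d\ge 8\Leftrightarrow r\ge 1\Leftrightarrow n\ge 4$.

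Where your proposal falls short is that it stops at the plan and never writes down a generating set; in this theorem the explicit set \emph{is} the proof. The paper takes three families, not two:
\[
A=\{(i,-i,1):0\le i\le m+2\epsilon-2\},\quad
B=\{(0,i,1):1\le i\le m\},\quad
C=\{(m-i,i,0):0\le i\le r\},
\]
with $m=\lfloor n/2\rfloor$, and sets $X=A\cup B\cup B^{-1}\cup C\cup C^{-1}$. Your sketch has the axis family $B\cup B^{-1}$ and a symmetric layer-$0$ set like $C\cup C^{-1}$, but it omits the anti-diagonal family $A$ of layer-$1$ involutions. That family is not optional in the paper's argument: products $a(i)\cdot b(\cdot)^{-1}$ are what cover the ``off-axis'' region $\{(i,j,0):1\le i\le m-2+\epsilon,\ m+1\le j\le n-1-i\}$ of layer $0$, and the odd cardinality $|A|=m+2\epsilon-1$ is precisely what makes the parity of $|X|$ come out to $2n-s+\epsilon$ in all four residue classes. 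Your plan to absorb this into a larger $X_0$ of ``size about $n$'' may be salvageable, but it is a different construction that you have not carried out, and the covering of layer $0$ by $X_0+X_0$ together with the axis products would need a genuinely new argument.

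One concrete slip: you write that ``the layer-$1$ axes contain the involutions $(a,-a,1)$''. They do not; the only involution on the two axes is $(0,0,1)$. The involutions $(a,-a,1)$ form the anti-diagonal, which is exactly the missing family $A$. This is also why your parity discussion, which tries to locate the self-paired generators on the axes, does not quite close.

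In short: right group, right reformulation, right diagnosis of where the difficulty lies, but the proof is the explicit generating set and the case-by-case verification, and those are absent. If you add the anti-diagonal family $A$ and carry out the bookkeeping (elements $(i,j,0)$ split by whether $j=-i$, elements $(i,j,1)$ split by the congruence class of $i+j$), you recover the paper's proof verbatim.
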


\begin{proof}
We set $m=\floor{\frac{n}{2}}=2r+s$. Let the underlying group of the Cayley graph $G=Cay(\Gamma,X)$ be $\Gamma=\Gamma_n=(\zn\times\zn)\rtimes\Z_2$, defined in section \ref{sec_preliminaries} and let the generating set $X$ be the union $X=A\cup B\cup B^{-1}\cup C\cup C^{-1}$, where the sets $A$, $B$ and $C$ are defined as follows:\\
$A=\{a(i)\vert i\in\{0,1,\dots m+2\epsilon-2\}\}$, $a(i)=(i,-i,1)$, $a(i)^{-1}=a(i)$\\
$B=\{b(i)\vert i\in\{1,2,\dots m\}\}$, $b(i)=(0,i,1)$, $b(i)^{-1}=(-i,0,1)$\\ 
$C=\{c(i)\vert i\in\{0,1,\dots r\}\}$, $c(i)=(m-i,i,0)$, $c(i)^{-1}=(-m+i,-i,0)$\\
We can see that $\vert A\vert=m+2\epsilon-1$, $\vert B\cup B^{-1}\vert=n-\epsilon$ and $\vert C\cup C^{-1}\vert=2r+\epsilon+1$.
Therefore the generating set $X$ has order $(m+2\epsilon-1)+(n-\epsilon)+(2r+\epsilon+1)=2n-s+\epsilon$ and the Cayley graph $G$ has order $\vert G\vert=\vert \Gamma\vert=2n^2$.
Since the degree of $G$ is $d=\vert X\vert=2n-s+\epsilon$, the graph $G$ has order $\frac12(d+s-\epsilon)^2$. To show that the Cayley graph has diameter two it is sufficient to show that every element of $\Gamma_n$ is from $X$ or it can be written as a product of two elements from $X$.\\

The rest of the proof is divided into two parts: in part I) we generate elements of the form $(i,j,0)$ and in the part II) we show how to generate elements of the form $(i,j,1)$. 
In the next, all calculations are performed modulo $n$. Note that if $n$ is even then $-m=m$ and if $n$ is odd then 
$-m=\ceil{\frac{n}{2}}=m+\epsilon$.\\\\
I) Generating elements of the form $(i,j,0)$.\\\\
a) Elements of the form $(i,-i,0)$.\\
1) $0\leq i\leq m-2$\\ 
$(i,-i,0)=a(m-2)\cdot a(m-2-i)=(m-2,2-m,1)(m-2-i,-m+2+i,1)$\\
2a) $n$ is odd\\ 
$(m-1,-m+1,0)=a(m)\cdot a(1)=(m,-m,1)(1,-1,1)$\\
$(m,-m,0)=a(m)\cdot a(0)=(m,-m,1)(0,0,1)$\\
2b) $n$ is even\\
$(m-1,-m+1,0)=c(r)^{-1}\cdot c(r-1+s)^{-1}=(m+r,-r,0)(m+r-1+s,-r+1-s,0)$\\
$(m,-m,0)=b(m)\cdot b(m)=(0,m,1)(0,m,1)$ (if $n$ is even then $m=-m$)\\
The other elements of the form $(i,j,0)$ are inverses of the previous.\\\\
b) Elements of the form $(i,j,0)$, $j\ne -i$.\\
All the other elements in this part will be generated as products of generators from $A\cup B\cup B^{-1}$. It is easy to see that if 
$u,v\in\{(i',j',1)\vert i',j'\in\zn\}$ and 
$u\cdot v=(i,j,0)$, then $v\cdot u=(j,i,0)$, $u^{-1}\cdot v^{-1}=(-j,-i,0)$ and $v^{-1}\cdot u^{-1}=(-i,-j,0)$. Therefore it is sufficient to show how to generate 
elements $(i,j,0)$ for $0\leq i\leq m$, $i\leq j<n-1$.\\
1a) $i=0, 1\leq j\leq m-1$\\
$(0,j,0)=b(m)\cdot b(m-j)^{-1}=(0,m,1)(-m+j,0,1)$\\
1b) $i=0, j=m$\\
$(0,m,0)=b(m)\cdot a(0)=(0,m,1)(0,0,1)$\\
1c) $i=0, m+1\leq j\leq n-1$\\
the elements $(0,j,0)$ are inverses of those in 1a) and 1b)\\
2) $1\leq i\leq m, i\leq j\leq m$\\
$(i,j,0)=b(j)\cdot b(i)=(0,j,1)(0,i,1)$\\
3) $1\leq i\leq m-2+\epsilon, m+1\leq j\leq n-1-i$\\
$(i,j,0)=a(i)\cdot b(-i-j)^{-1}=(i,-i,1)(i+j,0,1)$\\\\\\
II) Generating elements of the form $(i,j,1)$. There are exactly $n$ congruence classes $C_0,C_1,\dots,C_{n-1}$ of elements of the form $(i,j,1)$ such that 
$(i,j,1)\in C_k$ if and only if $i+j=k$, $k=0,1,\dots,n-1.$ Since $(i,j,1)\in C_k$ if and only if $(i,j,1)^{-1}\in C_{-k}$, we will do the proof only for $k=0,1,\dots,m.$ 
For fixed $k$ it is sufficient to show that either first or the second coordinate runs from $0$ to $n-1$.\\\\
a) $1\leq k\leq m-1$\\
1) The second coordinate is $0,1,\dots,r$:\\
$(k-j,j,1)=c(j)\cdot b(m-k)^{-1}=(m-j,j,0)(-m+k,0,1)$\\
2) The second coordinate is $r+s,\dots,2r+s$:\\
$(m+\epsilon+k+j,m-j,1)=b(m-k)^{-1}\cdot c(j)=(-(m-k),0,1)(m-j,j,0)$ \\
3) The second coordinate is $2r+s+\epsilon,\dots,32r+s+\epsilon$:\\
$(m+k-j,m+\epsilon+j,1)=b(m+\epsilon-k)^{-1}\cdot c(j)^{-1}=(-(m+\epsilon-k),0,1)(m+\epsilon+j,-j,0)$\\
4) The second coordinate is $3r+2s+\epsilon,\dots,n-1$:\\ 
$(k+j,-j,1)=c(j)^{-1}\cdot b(m+\epsilon-k)^{-1}=(m+\epsilon+j,-j,0)(-(m+\epsilon-k),0,1)$\\\\
b) $k=m$\\
1) The first coordinate is $0,1,\dots,r$:\\
$(i,m-i,1)=a(0)\cdot c(i)=(0,0,1)(m-i,i,0)$\\
2) The first coordinate is $r+s,\dots,2r+s$:\\
$(m-i,i,1)=c(i)\cdot a(0)=(m-i,i,0)(0,0,1)$ \\
3) The first coordinate is $2r+s+\epsilon,\dots,3r+s+\epsilon$:\\
$(m+\epsilon+i,-i,1)=c(i)^{-1}\cdot a(0)=(m+\epsilon+i,-i,0)(0,0,1)$\\
4) The first coordinate is $3r+2s+\epsilon,\dots,n-1$:\\
$(-i,m+\epsilon+i,1)=a(0)\cdot c(i)^{-1}=(0,0,1)(m+\epsilon+i,-i,0)$\\\\
c) $k=0$\\
1a) $n$ is even, the first coordinate is $0,1,\dots,r$:\\
$(i,-i,1)=b(m)\cdot c(i)=(0,m,1)(m-i,i,0)$ \\
2a) $n$ is even, the first coordinate is $r+s,\dots,2r+s$:\\
$(m-i,i,1)=c(i)\cdot b(m)=(m-i,i,0)(0,m,1)$\\
1,2b) $n$ is odd, the first coordinate is $0,1,\dots,2r+s$:\\
$(i,-i,1)=a(i)$\\
3) The first coordinate is $2r+s+\epsilon,\dots,3r+s+\epsilon$:\\
$(-m+i,m-i,1)=b(m)^{-1}\cdot c(i)=(-m,0,1)(m-i,i,0)$ \\
4) The first coordinate is $3r+2s+\epsilon,\dots,n-1$:\\
$(-i,i,1)=b(m)\cdot c(i)^{-1}=(0,m,1)(m+\epsilon+i,-i,0)$
\end{proof}
\section{New Record Cayley Graphs of Small Degrees}\label{sec_record_CG}
In this section we construct Cayley graphs of diameter two of large orders for several small degrees. Some of these graphs has order greater than $2/3$ of Moore bound, and even the graph of degree 16 has order greater than $3/4$ of Moore bound. These graphs were found using GAP (Groups,  Algorithms,  Programming - a System for Computational Discrete Algebra \cite{GAP}).

\begin{theorem}\label{thm_record}
There are the following lower bounds on the order of Cayley graphs of degrees $d\in\{16,21,23,28,31,37,40,46,49,54\}$ and diameter 2:
$C(16,2)\geq 200, C(21,2)\geq 288, C(23,2)\geq 392, C(28,2)\geq 512, C(31,2)\geq 648, C(37,2)\geq 800, C(40,2)\geq 968, C(46,2)\geq 1152, C(49,2)\geq 1352, C(54,2)\geq 1568$.
\end{theorem}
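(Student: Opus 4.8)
The plan is to prove each of the ten lower bounds by exhibiting, for the stated degree $d$, an explicit Cayley graph $Cay(\Gamma,X)$ of diameter two whose order matches the claimed value. Since the bounds $C(d,2)\ge N$ are \emph{existence} statements, it suffices to produce one witnessing graph per degree; the theorem is a table of ten such witnesses. I would first observe that the orders claimed ($200,288,392,512,648,800,968,1152,1352,1568$) are all of the form $2n^2$ for integers $n=10,12,14,16,18,20,22,24,26,28$, which strongly suggests that the underlying groups are exactly the groups $\Gamma_n=(\zn\times\zn)\rtimes\Z_2$ from Section~\ref{sec_preliminaries}, just as in Theorem~\ref{thm_main}. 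So the first step is to identify, for each listed degree, the relevant $n$ and confirm $|\Gamma_n|=2n^2$ equals the target order.

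Next I would note that these orders \emph{exceed} the $\frac12 d^2$ guaranteed by Theorem~\ref{thm_main}: for instance $d=16$ gives $\frac12 d^2=128<200$, and indeed $200>\frac34(d^2+1)$. Thus Theorem~\ref{thm_main} alone does not yield these bounds, and the generating sets here must be genuinely better (denser-reaching) than the generic family $A\cup B\cup B^{-1}\cup C\cup C^{-1}$. The paper states outright that these graphs were found by computer search in GAP \cite{GAP}, so the natural proof structure is: for each degree, present the explicit generating set $X\subseteq\Gamma_n$ (a unit-free, inverse-closed subset of the stated size $d$), and then verify that every non-identity element of $\Gamma_n$ lies in $X\cup X^2$, i.e.\ is a generator or a product of two generators. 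The verification is the diameter-two condition spelled out in the proof of Theorem~\ref{thm_main}.

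Concretely, the key steps I would carry out are: (i) for each $d\in\{16,21,23,28,31,37,40,46,49,54\}$ fix the corresponding $n$ and the group $\Gamma_n$; (ii) exhibit the generating set $X$, checking $X=X^{-1}$, $(0,0,0)\notin X$, and $|X|=d$ (using the product and inverse formulas $(x_0,x_1,i)^{-1}=(-x_i,-x_{i+1},i)$ and $(x_0,x_1,i)(y_0,y_1,j)=(x_0+y_i,x_1+y_{i+1},i+j)$ from Section~\ref{sec_preliminaries}); (iii) confirm that the set of products $X\cup X^2$ covers all $2n^2$ group elements, establishing diameter at most two, and hence $C(d,2)\ge 2n^2$. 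Because the graphs are Cayley they are automatically vertex-transitive, so no separate symmetry argument is needed.

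The main obstacle is that step (iii) is a finite but large verification rather than a clean closed-form argument: the covering condition $\Gamma_n=X\cup X^2$ does not follow from a single uniform formula across the ten degrees, and the generating sets are bespoke outputs of a heuristic search. I would therefore expect the honest proof to be a verification-by-exhibition, delegating the exhaustive product check to the computer algebra computation (the GAP search that produced them), and I would present the explicit generator lists (analogous to the commented-out triple macros $\ax,\bx,\cx,\dots$ at the top of the source) as the certificate. The one genuinely mathematical subtlety to watch is congruence bookkeeping modulo $n$ when confirming inverse-closedness and computing products, exactly as in the case split between $n$ even and $n$ odd in the proof of Theorem~\ref{thm_main}; this is where a hand check is most likely to slip, so I would organize the element-coverage check by the cosets of $\zn\times\zn$ (the $i=0$ and $i=1$ layers) to keep the casework tractable.
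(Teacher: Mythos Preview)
Your proposal is correct and follows essentially the same approach as the paper: for each degree you take the group $\Gamma_n=(\zn\times\zn)\rtimes\Z_2$ with the appropriate $n\in\{10,12,\dots,28\}$, exhibit an explicit inverse-closed unit-free generating set $X$ of size $d$, and verify by direct (computer) computation that $X\cup X^2=\Gamma_n$. The paper adds one organizational detail you did not anticipate: it keeps the $C$-block $\{(m-i,i,0):0\le i\le r\}$ from Theorem~\ref{thm_main} fixed and only replaces the $A$ and $B$ blocks by the computer-found sets, and it includes a short GAP script that performs the diameter-two check.
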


\begin{proof}
We present a detailed proof for $d=16$. 
The underlying group of the corresponding Cayley graph is the group $\Gamma=\Gamma_{10}=\Z_{10}^2\rtimes\Z_2$ (as in Theorem \ref{thm_main}, for n=10) and the generating set
$X=A\cup B\cup B^{-1}\cup C\cup C^{-1}$, 
where $A=\{(0,0,1)\}$, 
$B=\{(1,0,1),(1,3,1),(1,7,1),(5,0,1),(5,2,1)\}$ and 
$C=\{(5,0,0),(4,1,0),(3,2,0)\}$. We can see that the order of the Cayley graph $G=Cay(\Gamma,X)$ is $\vert G\vert=\vert\Gamma\vert=200$ and its degree is $d=\vert X\vert=16$. It can be verified by a straightforward calculation that the Cayley graph $G$ has diameter 2. That is, $C(16,2)\geq 200$. For the calculation one can use, for example, the following code (in GAP \cite{GAP}):
\begin{verbatim}
n:=10;;
A:=[ [ 0, 0, 1 ] ];;
B:=[ [ 1, 0, 1 ], [1, 3, 1 ], [ 1, 7, 1], [5, 0, 1 ], [5, 2, 1] ];;
C:=[ [ 5, 0, 0 ], [ 4, 1, 0 ], [ 3, 2, 0 ] ];;
Xgen:=Union(A,B,C);;
for i in B do AddSet(Xgen,[(n-i[2]) mod n,(n-i[1]) mod n,1]); od;
for i in C do AddSet(Xgen,[(n-i[1]) mod n,(n-i[2]) mod n,0]); od;
times := function(x,y)
 if x[3]=0 
  then return [(x[1]+y[1]) mod n, (x[2]+y[2]) mod n, (x[3]+y[3]) mod 2]; 
  else return [(x[1]+y[2]) mod n, (x[2]+y[1]) mod n, (x[3]+y[3]) mod 2]; 
 fi;
end;;
Generate:=[];; 
for x in Xgen do for y in Xgen do AddSet(Generate,times(x,y)); od; od;
Print("Degree d = ",Size(Xgen),"\n");
Print("Number of generated elements = ",Size(Generate));
\end{verbatim}

\noindent
For degrees $d=21,23,28,31,37,40,46,49,54$, the underlying group of the corresponding Cayley graph is the group $\Z_{n}^2\rtimes\Z_2$, with 
$n=12,14,16,18,20,22,24,26,28$, respectively, and the generating set is $X=A\cup B\cup B^{-1}\cup C\cup C^{-1}$, where $C=\{c(i)\vert i\in\{0,1,\dots r\}\}$, $c(i)=(m-i,i,0)$,
$m=\frac{n}{2}$, $r=\floor{\frac{n}{4}}=\floor{\frac{m}{2}}$. For the sets $A$ and $B$ we have:\\

\noindent
\begin{longtable}{l}\hline
$n=12,\ m=6,\ r=3,\ \vert G\vert=2\cdot 12^2=288,\ d=21$\\
$A=\{(0,0,1),(3,9,1)\}$\\
$B=\{(0,1,1),(0,2,1),(6,9,1),(4,0,1),(5,0,1),(1,5,1)\}$\\\hline
$n=14,\ m=7,\ r=3,\ \vert G\vert=2\cdot 14^2=392,\ d=23$\\
$A=\{(0,0,1),(9,5,1)\}$\\
$B=\{(0,1,1),(0,2,1),(3,0,1),(12,6,1),(5,0,1),(7,13,1),(4,3,1)\}$\\\hline
$n=16,\ m=8,\ r=4,\ \vert G\vert=2\cdot 16^2=512,\ d=28$\\
$A=\{(0,0,1),(1,15,1),(2,14,1)\}$\\
$B=\{(0,1,1),(0,2,1),(11,8,1),(6,14,1),(0,5,1),(9,13,1),(8,15,1),(4,4,1)\}$\\\hline
$n=18,\ m=9,\ r=4,\ \vert G\vert=2\cdot 18^2=648,\ d=31$\\
$A=\{(0,0,1),(6,12,1),(7,11,1),(14,4,1)\}$\\
$B=\{(0,1,1),(2,0,1),(6,15,1),(1,3,1),(8,5,1),(17,7,1),(13,12,1),(11,15,1),$ \\\phantom{$B=\{$}$(6,3,1)\}$\\\hline
$n=20,\ m=10,\ r=5,\ \vert G\vert=2\cdot 20^2=800,\ d=37$\\ 
$A=\{(0,0,1),(1,19,1),(2,18,1),(3,17,1),(10,10,1),(13,7,1)\}$\\
$B=\{(0,1,1),(5,17,1),(0,3,1),(9,15,1),(6,19,1),(17,9,1),(11,16,1),(3,5,1),$ \\\phantom{$B=\{$}$(0,9,1),(3,7,1)\}$\\\hline
$n=22,\ m=11,\ r=5,\ \vert G\vert=2\cdot 22^2=968,\ d=40$\\
$A=\{(0,0,1),(1,21,1),(2,20,1),(3,19,1),(4,18,1),(5,17,1),(11,11,1)\}$\\
$B=\{(0,1,1),(9,15,1),(3,0,1),(11,15,1),(9,18,1),(2,4,1),(19,10,1),(3,5,1),$ \\\phantom{$B=\{$}$(14,17,1),(20,12,1),(11,0,1)\}$\\\hline
$n=24,\ m=12,\ r=6,\ \vert G\vert=2\cdot 24^2=1152,\ d=46$\\
$A=\{(0,0,1),(1,23,1),(2,22,1),(3,21,1),(4,20,1),(5,19,1),(6,18,1),(7,17,1),$ \\\phantom{$A=\{$}$(12,12,1)\}$\\
$B=\{(0,1,1),(0,2,1),(0,3,1),(0,4,1),(0,5,1),(0,6,1),(0,7,1),(16,16,1),$ \\\phantom{$B=\{$}$(14,19,1),(1,9,1),(14,21,1),(12,0,1)\}$\\\hline
$n=26,\ m=13,\ r=6,\ \vert G\vert=2\cdot 26^2=1352,\ d=49$\\
$A=\{(0,0,1),(1,25,1),(2,24,1),(3,23,1),(4,22,1),(5,21,1),(6,20,1),(7,19,1),$ \\\phantom{$A=\{$}$(8,18,1),(13,13,1)\}$\\
$B=\{(0,1,1),(0,2,1),(0,3,1),(0,4,1),(0,5,1),(0,6,1),(0,7,1),(0,8,1),(16,19,1),$ \\\phantom{$B=\{$}$(17,19,1),(3,8,1),(3,9,1),(13,0,1)\}$\\\hline
$n=28,\ m=14,\ r=7,\ \vert G\vert=2\cdot 28^2=1568,\ d=54$\\
$A=\{(0,0,1),(1,27,1),(2,26,1),(3,25,1),(4,24,1),(5,23,1),(6,22,1),(7,21,1),$ \\\phantom{$A=\{$}$(8,20,1),(9,19,1),(14,14,1)\}$\\
$B=\{(0,1,1),(0,2,1),(0,3,1),(0,4,1),(0,5,1),(0,6,1),(0,7,1),(0,8,1),(0,9,1),$ \\\phantom{$B=\{$}$(17,21,1),(18,21,1),(3,9,1),(3,10,1),(14,0,1)\}$\\\hline
\end{longtable}
which complete the proof of this theorem.
\end{proof}

\section{Conclusion and Remarks}\label{sec_remarks}

Below, the reader can see a table of present record Cayley graphs of diameter two 
(retrieved from online table \cite{wiki_Cayley_2}), with our results added (bold font). It shows that, for example, for degrees $13\leq d\leq 57$ our construction 
(plus Theorem \ref{thm_record}) gives better results in 34 cases of total 45 degrees. The orders of Cayley graphs for degrees $3\leq d\leq 12$ are not listed - these graphs were found, and shown to be optimal by Marston Conder \cite{wiki_Marston}. Note that until now the largest known vertex-transitive graphs of diameter two have values 
$(d;2)=(16;162),(17;170),(18;192),(23,24;338),(31,\dots,35;578)$ while our results give $(d;2)=(16,17,18;200),(23,24;392),(31,\dots,35;648)$.

\newcommand{\myhline}{\noalign{\global\arrayrulewidth1pt}\hline
                      \noalign{\global\arrayrulewidth.4pt}}

\def\percent#1{& #1\%}
\def\percentbf#1{& \textbf{#1}\%}
\begin{center}
\protect\setlength{\abovecaptionskip}{30pt}
\begin{longtable}{|c||c|c|c|c||c|}\hline
Degree $d$& E. Loz & SS \cite{SS2005} & A \cite{A2014} & NEW & percentage \cr\myhline
13 & 112  &      &      &  \percent{65.88} \cr\hline
14 & 128  &      &      &  \percent{64.97} \cr\hline
15 & 144  &      &      &  \percent{63.71} \cr\hline
16 & 155  &      &      &  \textbf{200} \percentbf{77.82} \cr\hline
17 & 170  &      &      &  \textbf{200} \percentbf{68.96} \cr\hline
18 & 192  &      &      &  \textbf{200} \percentbf{61.53} \cr\hline
19 & 200  &      &      &  \percent{55.24} \cr\hline
20 & 210  &      &      &  \percent{52.36} \cr\hline
21 &   & 242     &      &  \textbf{288} \percentbf{65.15} \cr\hline
22 &   & 242     &      &  \textbf{288} \percentbf{59.38} \cr\hline
23 &   &      & 270     &  \textbf{392} \percentbf{73.96} \cr\hline
24 &   &      & 280     &  \textbf{392} \percentbf{67.93} \cr\hline
25 &   & 338     &      &  \textbf{392} \percentbf{62.61} \cr\hline
26 &   & 338     &      &  \textbf{392} \percentbf{57.90} \cr\hline
27 &   &      & 378     &  \textbf{392} \percentbf{53.69} \cr\hline
28 &   &      & 392     &  \textbf{512} \percentbf{65.22} \cr\hline
29 &   &      & 434     &  \textbf{512} \percentbf{60.80} \cr\hline
30 &   &      & 448     &  \textbf{512} \percentbf{56.82} \cr\hline
31 &   & 512  &         &  \textbf{648} \percentbf{67.35} \cr\hline
32 &   & 512  &         &  \textbf{648} \percentbf{63.21} \cr\hline
33 &   & 512  &         &  \textbf{648} \percentbf{59.44} \cr\hline
34 &   & 512  &         &  \textbf{648} \percentbf{56.00} \cr\hline
35 &   &      & 630     &  \textbf{648} \percentbf{52.85} \cr\hline
36 &   &      & 648     &  \percent{49.96} \cr\hline
37 &   & 722  &         &  \textbf{800} \percentbf{58.39} \cr\hline
38 &   & 722  &         &  \textbf{800} \percentbf{55.36} \cr\hline
39 &   &      & 774     &  \textbf{800} \percentbf{52.56} \cr\hline
40 &   &      & 792     &  \textbf{968} \percentbf{60.46} \cr\hline
41 &   &      & 858     &  \textbf{968} \percentbf{57.55} \cr\hline
42 &   &      & 880     &  \textbf{968} \percentbf{54.84} \cr\hline
43 &   &      & 946     &  \textbf{968} \percentbf{52.32} \cr\hline
44 &   &      & 968     &  \percent{49.97} \cr\hline
45 &   & 1058 &         &  \percent{52.22} \cr\hline
46 &   & 1058 &         &  \textbf{1152} \percentbf{54.41} \cr\hline
47 &   &      & 1122    &  \textbf{1152} \percentbf{52.12} \cr\hline
48 &   &      & 1144    &  \textbf{1152} \percentbf{49.97} \cr\hline
49 &   & 1250 &         &  \textbf{1352} \percentbf{56.28} \cr\hline
50 &   & 1250 &         &  \textbf{1352} \percentbf{54.05} \cr\hline
51 &   &      & 1326    &  \textbf{1352} \percentbf{51.96} \cr\hline
52 &   &      & 1352    &  \percent{49.98} \cr\hline
53 &   & 1458 &         &  \percent{51.88} \cr\hline
54 &   & 1458 &         &  \textbf{1568} \percentbf{53.75} \cr\hline
55 &   &      & 1534    &  \textbf{1568} \percentbf{51.81} \cr\hline
56 &   &      & 1560    &  \textbf{1568} \percentbf{49.98} \cr\hline
57 &   & 1682 &         &  \percent{51.75} \cr\hline
\caption{Online record table of degree-diameter problem for Cayley graphs and our results \cite{wiki_Cayley_2}}
\label{tbl_main}              
\end{longtable} 
\end{center}
\section*{Acknowledgements}


The research was supported by VEGA Research Grant No. 1/0811/14 and by 
the Operational Programme 'Research \& Development' funded by the European Regional Development Fund through implementation of the project ITMS 26220220179.

\end{document}